\newcommand{\abs}[1]{\left\lvert#1\right\rvert}
\newcommand{\paren}[1]{\left( #1 \right)}
\newcommand{\set}[1]{\left\{ #1 \right\}}
\newcommand{\setcond}[2]{\left\{ #1 \;\middle\vert\; #2 \right\}}
\newcommand{\RR}{\mathbb{R}}
\newcommand{\ZZ}{\mathbb{Z}}
\newcommand{\QQ}{\mathbb{Q}}
\newtheorem{thm}{Theorem}[section]
\newtheorem{lem}[thm]{Lemma}
\theoremstyle{definition}
\title{Sums of transcendental dilates}
\author{David Conlon\thanks{Department of Mathematics, Caltech, Pasadena, CA 91125, USA. Email: {\tt dconlon@caltech.edu}. Research supported by NSF Award DMS-2054452.} \and Jeck Lim\thanks{Department of Mathematics, Caltech, Pasadena, CA 91125, USA. Email: {\tt jlim@caltech.edu}. Research partially supported by an NUS Overseas Graduate Scholarship.}}
\date{}
\begin{document}
\maketitle

\begin{abstract}
We show that there is an absolute constant $c>0$ such that 
$|A+\lambda\cdot A|\geq e^{c\sqrt{\log |A|}}|A|$
for any finite subset $A$ of $\mathbb{R}$ and any transcendental number $\lambda\in\mathbb{R}$. By a construction of Konyagin and \L aba, this is best possible up to the constant $c$.
\end{abstract}

\section{Introduction}

For any subset $A$ of $\mathbb{R}$ and any $\lambda \in \mathbb{R}$, let 
\[A + \lambda \cdot A = \{a + \lambda a' : a, a' \in A\}.\]
Our interest here will be in estimating the minimum size of such sums of dilates 
given $|A|$. 

When $\lambda$ is rational, say $\lambda = p/q$ with $p$ and $q$ coprime, a result of Bukh~\cite{B08} implies that 
$$|A+ \lambda\cdot A| \geq (|p| + |q|)|A| - o(|A|),$$
which is best possible up to the lower-order term (though see~\cite{BS14} for an improvement of the lower-order term to a constant depending only on $\lambda$). 
The more general case where $\lambda$ is algebraic
has also been studied in some depth. In particular, 
a result of the authors~\cite{CL22} says that if $\lambda = (p/q)^{1/d}$ for some $p, q, d \in \mathbb{N}$, each taken as small as possible for such a representation, then
$$|A+ \lambda\cdot A| \geq (p^{1/d} + q^{1/d})^d|A| - o(|A|),$$
which is again best possible up to the lower-order term. Moreover, as noted by Krachun and Petrov~\cite{KP20}, for any fixed algebraic number $\lambda$, the minimum size of $|A+ \lambda\cdot A|$ is always at most linear in $|A|$. 

For $\lambda$ transcendental, the picture is very different. Indeed, Konyagin and \L aba~\cite{KL06} showed that in this case there exists an absolute constant $c > 0$ such that 
$$|A + \lambda \cdot A| \geq c \frac{\log |A|}{\log \log |A|} |A|.$$
That is, $|A + \lambda \cdot A|$ can no longer be linear in $|A|$. This result was subsequently improved by Sanders~\cite{S08}, by Schoen~\cite{Sch11} and again by Sanders~\cite{S12} using successive quantitative refinements of Freiman's theorem~\cite{Fr73} on sets of small doubling, with Sanders' second bound saying that there exists an absolute constant $c > 0$ such that, for $|A|$ sufficiently large, 
$$|A + \lambda \cdot A| \geq e^{\log^c |A|} |A|.$$
This already comes quite close to matching the best known upper bound, due to Konyagin and \L aba~\cite{KL06}, which says that there exists $c' > 0$ and, for any fixed transcendental number $\lambda$,  arbitrarily large finite subsets $A$ of $\RR$ such that 
$$|A + \lambda \cdot A| \leq e^{c' \sqrt{\log |A|}} |A|.$$
Our main result says that this upper bound is in fact best possible up to the constant $c'$.

\begin{thm} \label{thm:mainintro}
There is an absolute constant $c>0$ such that 
$$|A+\lambda\cdot A|\geq e^{c\sqrt{\log |A|}}|A|$$
for any finite subset $A$ of $\mathbb{R}$ and any transcendental number $\lambda\in\mathbb{R}$.
\end{thm}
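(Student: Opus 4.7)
The plan is to argue by contradiction: assume $|A+\lambda\cdot A|\leq K|A|$ with the intent of showing $\log K\gtrsim\sqrt{\log|A|}$. First, apply the Pl\"unnecke--Ruzsa inequality to the pair $(A,\lambda\cdot A)$: from the hypothesis we obtain $|mA-nA|=|m(\lambda A)-n(\lambda A)|\leq K^{m+n}|A|$ for all $m,n\in\NN$, and hence $|A+A|\leq K^{O(1)}|A|$. By Sanders' quantitative Freiman--Ruzsa theorem, $A$ is contained in a proper generalized arithmetic progression $P=\{a_0+\sum_{i=1}^d n_iv_i:0\leq n_i<N_i\}$ of dimension $d$ and volume $|P|/|A|$ controlled by a polynomial in $\log K$. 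After standard reductions, we may assume the generators $v_1,\dots,v_d$ are $\QQ$-linearly independent and that $d$ is minimal among all such embeddings. Set $L=\sum_i\ZZ v_i$ and $V=\QQ L$.

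Transcendence enters through the observation that $V\subsetneq V+\lambda V$: otherwise multiplication by $\lambda$ would restrict to a $\QQ$-linear endomorphism of the finite-dimensional space $V$, forcing $\lambda$ algebraic. Iterating, $\dim_\QQ(V+\lambda V+\cdots+\lambda^kV)\geq d+k$ for every $k\geq 0$. In particular $L+\lambda L$ is a free abelian group of rank at least $d+1$, and $A+\lambda\cdot A$ sits inside a strictly larger lattice than $A$ itself does. The key combinatorial step is to convert this dimensional gain into the quantitative lower bound
\[
|A+\lambda\cdot A|\geq 2^{\Omega(d)}|A|^{1+1/d},
\]
which would match the Konyagin--\L aba upper bound.

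To prove this bound, I would reduce (via the Freiman machinery) to the extremal configuration $v_i=\lambda^{i-1}v$, where elements of $A+\lambda\cdot A$ identify with polynomials $P(\lambda)+\lambda Q(\lambda)$ of degree at most $d$ in $\lambda$, with $P$ and $Q$ having coefficients drawn from the coordinate image of $A$. The coefficient of $\lambda^0$ in such a sum is determined by a single coordinate of $P$, and that of $\lambda^d$ by a single coordinate of $Q$, while each intermediate coefficient is a sum of two coordinates (one from $P$, one from $Q$). A slicing argument that fixes the two extreme coefficients and then bounds the resulting $(d-1)$-dimensional sumset by induction should yield the claimed bound.

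Given this key step, the AM--GM inequality closes the argument: $\log K\geq\Omega(d)+(\log|A|)/d\geq 2\sqrt{\Omega(\log|A|)}$, contradicting $\log K<c\sqrt{\log|A|}$ for sufficiently small $c>0$. The hard part will be establishing the sumset lower bound with the full exponential factor $2^{\Omega(d)}$; a merely polynomial-in-$d$ gain would be defeated by the quasi-polynomial dimension bound $d\leq(\log K)^{3+o(1)}$ coming from Sanders' theorem. Overcoming this obstacle likely requires exploiting both the density of $A$ in $P$ and iterating the transcendence bound $\dim_\QQ(V+\lambda V+\cdots+\lambda^kV)\geq d+k$ at larger values of $k$.
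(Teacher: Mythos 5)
The broad arc of your argument---trade off the dimension $d$ of a structured container against the density of $A$ in it, and close with the AM--GM inequality $\Omega(d)+(\log|A|)/d\gtrsim\sqrt{\log|A|}$---does match the shape of the paper's proof. But the two middle steps you lean on are exactly where the paper departs, and as stated your version of both has genuine gaps.

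First, the reduction to $\QQ[\lambda]$ is obtained in the paper by an elementary observation of Krachun and Petrov (Lemma~\ref{lem:alg}), not by Freiman--Ruzsa. Invoking Sanders' theorem gives $A$ inside a proper GAP of dimension $d\lesssim(\log K)^{O(1)}$, but the generators $v_1,\dots,v_d$ of that GAP are arbitrary real numbers: nothing in the Freiman machinery forces, or can be made to force, the ``extremal configuration'' $v_i=\lambda^{i-1}v$. You need your GAP to be built from $\lambda$-powers for the subsequent slicing picture to even make sense, and that is precisely what Lemma~\ref{lem:alg} hands you for free, after which a compression argument and Ruzsa's estimate (Lemmas~\ref{lem:lowdim} and~\ref{lem:hdsums}) give $d\le 2K$ with $A$ a down-set in $V_d$. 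Replacing this elementary reduction by Sanders' theorem not only fails to give the $\lambda$-power structure, it also reintroduces the quasi-polynomial losses that kept the earlier approaches of Sanders and Schoen from reaching $e^{c\sqrt{\log|A|}}$, a danger you yourself flag at the end.

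Second, and more centrally, the bound $|A+\lambda\cdot A|\ge 2^{\Omega(d)}|A|^{1+1/d}$ is exactly the hard part, and a fiber-fixing induction does not obviously deliver it: fixing the first coordinate of $a$ and the last coordinate of $\Phi(b)$ leaves you with sums of slices $A_{a_1}+\Phi(A)_{b_d}$ whose sizes vary, and it is unclear how to relate the inductive quantity to $|A|$ with the required exponential gain. The paper does not prove your clean inequality directly. Instead it combines a discrete Brunn--Minkowski inequality over \emph{all} coordinate projections (Lemma~\ref{lem:discbm}), which gives $\sum_{I\subseteq[d+1]}|p_I(A+\Phi(A))|\ge 2^{d+1}|A|$, with the genuinely new input, Lemma~\ref{lem:noplane}: for a compressed down-set, every projection $p_I$ onto a set of coordinates whose longest run of consecutive indices has length $k$ satisfies $|p_I(\cdot)|\le N^{k/(k+1)}$ where $N$ controls the doubled sumset. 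That projection bound, applied not to $A$ but to $B=A+\Phi(A)$ after a Pl\"unnecke--Ruzsa step (Lemma~\ref{lem:pr}), is what makes the exponential-in-$d$ factor appear when the $2^{d+1}$ subsets $I$ are summed over. Your outline correctly identifies what has to be proved, and honestly says it is the hard part, but does not supply the argument; the projection lemma is the missing idea.
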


Before proceeding to the proof of this theorem, let us briefly look at the upper bound, which comes from considering sets of the form
\[A = \left\{\sum_{i=1}^m a_i \lambda^i: (a_1, \dots, a_m) \in [n]^m\right\}.\]
This set has size $n^m$ and 
\[A + \lambda \cdot A \subset \left\{\sum_{i=1}^{m+1} b_i \lambda^i : (b_1, \dots, b_{m+1}) \in [2n]^{m+1}\right\},\]
which has size $(2n)^{m+1}$. If we take $n = 2^m$, we have $|A| = n^m = 2^{m^2}$, so that 
$$|A+\lambda\cdot A| \leq (2n)^{m+1} = 2^{(m+1)^2} \leq e^{c' \sqrt{\log |A|}} |A|$$
for some $c' > 0$, as required. This bound is reminiscent, both in its form and its proof, of Behrend's lower bound~\cite{B46} for the largest subset of $[n]$ containing no three-term arithmetic progressions. 
Our Theorem~\ref{thm:mainintro} is arguably the first example where such a bound is known to be tight to this level of accuracy. 

\section{Proof of Theorem~\ref{thm:mainintro}}

To begin, we use a simple observation of Krachun and Petrov to recast the problem. 

\begin{lem}[Krachun--Petrov~\cite{KP20}] \label{lem:alg}
Suppose that $\lambda\in\mathbb{C}$ and $A$ is a finite subset of $\mathbb{C}$. Then there exists $B\subset\QQ[\lambda]$ such that $|B| = |A|$ and $|B + \lambda \cdot B| \leq |A +\lambda \cdot A|$.
\end{lem}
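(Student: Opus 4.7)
The plan is to reduce to modules over $\QQ[\lambda]$ and then use a generic-projection argument. View $\CC$ as a module over the integral domain $\QQ[\lambda]$ (acting by ordinary multiplication), and let $M$ denote the $\QQ[\lambda]$-submodule of $\CC$ generated by the finite set $A$.

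First I would verify that $M$ is a finitely generated, torsion-free module over a principal ideal domain. It is finitely generated by $A$ by construction. Torsion-freeness is immediate: for nonzero $p \in \QQ[\lambda]$ and nonzero $x \in \CC$, one has $p(\lambda) \neq 0$ in $\CC$ (by definition, if $\lambda$ is transcendental; automatically, if $\lambda$ is algebraic, since then $\QQ[\lambda]$ is a field), so $p(\lambda) \cdot x \neq 0$. Moreover, $\QQ[\lambda]$ is a PID — it is either a field (when $\lambda$ is algebraic) or the polynomial ring $\QQ[x]$ (when $\lambda$ is transcendental). The structure theorem for finitely generated modules over a PID then gives a $\QQ[\lambda]$-module isomorphism $M \cong \QQ[\lambda]^r$ for some $r \geq 0$.

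Next I would construct a $\QQ[\lambda]$-linear map $\phi : M \to \QQ[\lambda]$ that is injective on $A$. In coordinates any such map takes the form $\phi_q(p_1,\dots,p_r) = \sum_k q_k p_k$ for a fixed $q = (q_1,\dots,q_r) \in \QQ[\lambda]^r$. For each pair $a \ne a'$ in $A$, the difference $a - a'$ corresponds to a nonzero tuple in $\QQ[\lambda]^r$, and the condition $\phi_q(a-a') = 0$ cuts out a proper $\QQ$-linear subspace of $\QQ[\lambda]^r$. Since $\QQ[\lambda]^r$ is a $\QQ$-vector space over the infinite field $\QQ$, a finite union of proper $\QQ$-subspaces cannot cover it, so a suitable $q$ exists. (Alternatively, one can construct $q$ explicitly by taking $q_k = \lambda^{(D+1)k}$ for $D$ larger than any degree that appears; the terms $\lambda^{(D+1)k} p_k$ then live in disjoint degree windows, forcing their sum to be nonzero whenever some $p_k$ is.)

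Finally, set $B = \phi(A) \subset \QQ[\lambda]$. Injectivity of $\phi$ on $A$ yields $|B| = |A|$, and $\QQ[\lambda]$-linearity gives
\[
B + \lambda \cdot B \;=\; \phi(A) + \lambda \cdot \phi(A) \;=\; \phi(A + \lambda \cdot A),
\]
so that $|B + \lambda \cdot B| \leq |A + \lambda \cdot A|$, as required. The only conceptual step is recognizing that $\QQ[\lambda]$ is a PID so the structure theorem applies; the rest is an essentially routine generic-projection argument.
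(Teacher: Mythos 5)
The paper cites this lemma to Krachun--Petrov and does not reproduce a proof, so there is no in-text argument to compare against. Your proof is correct and is, up to presentation, the natural one: $\QQ[\lambda]$ is a PID (a field if $\lambda$ is algebraic, isomorphic to $\QQ[x]$ if transcendental), $\CC$ is torsion-free over it, so the finitely generated submodule $M$ spanned by $A$ is free; a generic $\QQ[\lambda]$-linear functional $\phi : M \to \QQ[\lambda]$ then separates the finitely many differences $a - a'$ with $a, a' \in A$, and $\QQ[\lambda]$-linearity (which in particular commutes with multiplication by $\lambda$) yields $B + \lambda\cdot B = \phi(A) + \lambda\cdot\phi(A) = \phi(A + \lambda\cdot A)$, giving both $|B| = |A|$ and $|B+\lambda\cdot B| \le |A+\lambda\cdot A|$. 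One small caveat on your explicit alternative $q_k = \lambda^{(D+1)k}$: you should say that $D$ bounds the degrees of the coordinate tuples of the finitely many elements of $A$ (or $A - A$) under the fixed isomorphism $M \cong \QQ[\lambda]^r$, not of all of $M$, whose elements have unbounded degree; your main generic-projection argument avoids this bookkeeping and is airtight as written.
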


Suppose now that $V$ is the $\QQ$-vector space $\QQ[\lambda]$ with basis $\set{1,\lambda,\lambda^2,\ldots}$. For any positive integer $d$, let $V_d\subset V$ be the $d$-dimensional subspace spanned by $\set{1,\lambda,\lambda^2,\ldots,\lambda^{d-1}}$, noting that $V=\bigcup_d V_d$. For any finite $A \subset V$, we must have $A\subset V_d$ for some $d$. Multiplication by $\lambda$ therefore corresponds to taking the linear map $\Phi:V\to V$ given by the union of the maps $V_d\to V_{d+1}$ with
$$(x_1,\ldots,x_d)\mapsto (0, x_1,\ldots,x_d).$$
Thus, the problem of estimating $|A + \lambda \cdot A|$ for finite $A \subset \RR$ and $\lambda$ transcendental is equivalent to estimating $|A + \Phi(A)|$ for finite $A \subset V$. In particular, we may reformulate Theorem~\ref{thm:mainintro} in the following terms.

\begin{thm} \label{thm:main}
There is an absolute constant $c>0$ such that if $A\subset V$ with $|A|=n$, then
$$|A+\Phi(A)|\geq e^{c\sqrt{\log n}}n.$$
\end{thm}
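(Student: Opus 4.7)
The plan is to prove the contrapositive formulation that $K := |A + \Phi(A)|/n \geq e^{c\sqrt{\log n}}$ by sandwiching the size of the iterated sumset $S_m := A + \Phi(A) + \Phi^2(A) + \cdots + \Phi^m(A)$ between matching bounds, for a parameter $m$ of order $\sqrt{\log n}$.

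For the upper bound, since $\Phi$ preserves cardinalities we have $|\Phi^j(A) + \Phi^{j+1}(A)| = |A + \Phi(A)| \leq Kn$ for every $j \geq 0$. Feeding these layer-to-layer ratios into Plünnecke's commutative graph inequality, applied to the layered graph whose $h$-th layer is $S_h$ and whose edges send $s \in S_h$ to $s + \Phi^{h+1}(a) \in S_{h+1}$ for each $a \in A$, should yield the clean multiplicative bound
$$|S_m| \leq K^m\, n.$$

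For the lower bound, the goal is to show $|S_m| \geq n^{1+c}$ for some absolute $c>0$ at $m \asymp \sqrt{\log n}$. The key structural feature is that $\Phi$ shifts the grading on $V = \mathbb{Q}[\lambda]$ and that the powers of $\lambda$ are $\mathbb{Q}$-linearly independent, so iterated shifts of $A$ are forced to populate many distinct coordinates. Writing $\psi \colon V^{m+1} \to V$ for the $\mathbb{Q}$-linear map $(a^{(0)}, \ldots, a^{(m)}) \mapsto \sum_j \Phi^j(a^{(j)})$, we have $S_m = \psi(A^{m+1})$, and the task reduces to bounding typical fiber sizes from above by $n^{m-c}$. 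Simple projections already give partial information—for example, the two outermost coordinates of an element of $S_m$ depend only on $a^{(0)}$ (the lowest-degree end) and on $a^{(m)}$ (the highest-degree end), so $|S_m|$ is bounded below by the product of the sizes of those two extremal coordinate projections of $A$. Upgrading such piecemeal bounds to the full $n^{1+c}$ at the critical scale is where the real combinatorial work lies; I would expect the argument either to involve a coordinate-slicing induction on the smallest $d$ with $A \subset V_d$, or to pass through a structural reduction to a (twisted) generalized arithmetic progression followed by a direct computation in the Behrend-like model.

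Combining the two bounds gives $n^{1+c} \leq K^m n$, hence $K \geq n^{c/m} = e^{c (\log n)/m}$, and choosing $m$ of order $\sqrt{\log n}$ yields the target $K \geq e^{c'\sqrt{\log n}}$. The main obstacle is the lower bound on $|S_m|$: the Plünnecke step is essentially automatic, but showing that the iterated shift sum of an arbitrary finite $A \subset V$ already reaches size $n^{1+\Omega(1)}$ by step $m \asymp \sqrt{\log n}$ is precisely the content that distinguishes the transcendental case from the algebraic one, and the Behrend-like construction $A = \{\sum_i a_i \lambda^i : a \in [n]^m\}$ shows that both bounds are tight at this scale.
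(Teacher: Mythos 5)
There is a genuine gap at the heart of the proposal: the lower bound $|S_m| \geq n^{1+c}$ for $m \asymp \sqrt{\log n}$ is asserted, not proven, and you acknowledge as much when you write that ``upgrading such piecemeal bounds \ldots is where the real combinatorial work lies.'' This is not a technical detail that one could expect to fill in routinely; it is exactly the content of the theorem. The extremal two-coordinate projection you mention only gives $|S_m| \gtrsim |p_1(A)|\cdot |p_{\deg}(A)|$, which can be as small as $n^{o(1)}$ for the Behrend-type set, so there is no straightforward route from that observation to $n^{1+\Omega(1)}$. The upper bound $|S_m| \leq K^{O(m)} n$ is believable (though as stated, Pl\"unnecke's graph inequality only yields the bound for some nonempty $Z \subseteq A$ rather than for $A$ itself; one would want to route through Ruzsa's triangle inequality and Pl\"unnecke--Ruzsa for sums of a single set, as the paper's Lemma~\ref{lem:pr} does for $m=2$), but the upper bound is the easy half.

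It is also worth noting that the paper does not follow your framework at all. Rather than pushing the iterated sumset out to scale $m \asymp \sqrt{\log n}$ and meeting a lower bound, the paper first compresses $A$ and reduces to dimension $d \leq 2K$ (Lemma~\ref{lem:lowdim}), then applies its Pl\"unnecke-type estimate only once to control $N := |A + \Phi(A) + \Phi(A+\Phi(A))| \leq K^{10}n$, proves the projection bound $|p_I(A+\Phi(A))| \leq N^{k/(k+1)}$ for compressed sets (Lemma~\ref{lem:noplane}, where $k$ is the longest run of consecutive indices in $I$), and closes with the discrete Brunn--Minkowski inequality (Lemma~\ref{lem:discbm}) summed over all $I \subseteq [d+1]$. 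The $\sqrt{\log n}$ emerges from optimizing $(k+1)\log 2 + \tfrac{\log n}{k+1}$ over $k$, not from the depth of an iterated sumset. So the projection/compression machinery, which is absent from your outline, is not an optional optimization -- it is what replaces the unproven lower bound on $|S_m|$.
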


We will focus on proving this latter result, which bears some relation to our recent work~\cite{CL22} on sums of linear transformations, from here on.

Before getting to the proof proper, we first note a few additional results that we will need. The first is a 
discrete variant of the Brunn--Minkowski theorem taken from~\cite{CL22}. In what follows, for each $I\subseteq [d]$, we write $p_I:\RR^d\to \RR^d$ for the projection onto the coordinates indexed by $I$, setting all other coordinates to 0. Note that we may naturally extend the definition of $p_I$ to $V_d$, and hence to $V$, by identifying $V_d$ with $\QQ^d$.

\begin{lem}[Conlon--Lim~{\cite[Lemma~2.1]{CL22}}] \label{lem:discbm}
For any finite subsets  $A,B$  of $\RR^d$,
$$\sum_{I\subseteq [d]}|p_I(A+B)|\geq (|A|^{1/d}+|B|^{1/d})^d.$$
\end{lem}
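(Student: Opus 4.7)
My plan is to prove the inequality by induction on the dimension $d$. For the base case $d = 1$, the left-hand side $|p_\emptyset(A+B)| + |A+B| = 1 + |A+B|$ meets the required bound of $|A|+|B|$ by the classical sumset lower bound $|A+B| \geq |A|+|B|-1$ for finite subsets of $\RR$.

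For the inductive step I slice along the $d$-th coordinate. Writing $T_A = \{t_1 < \cdots < t_a\}$ and $T_B = \{s_1 < \cdots < s_b\}$ for the distinct values of the last coordinate in $A$ and $B$, let $A_{t_i}, B_{s_j} \subseteq \RR^{d-1}$ be the corresponding fibers, of sizes $\alpha_i$ and $\beta_j$, and let $\pi \colon \RR^d \to \RR^{d-1}$ be the map that forgets the last coordinate. Partitioning each $I \subseteq [d]$ according to whether $d \in I$, one obtains
\[\sum_{I \subseteq [d]} |p_I(A+B)| \;=\; \sum_{J \subseteq [d-1]} |p_J(\pi(A) + \pi(B))| \;+\; \sum_{u \in T_A + T_B} \sum_{J \subseteq [d-1]} |p_J((A+B)_u)|.\]
The inductive hypothesis applied to $\pi(A) + \pi(B) \subseteq \RR^{d-1}$ bounds the first summand below by $\bigl(|\pi(A)|^{1/(d-1)} + |\pi(B)|^{1/(d-1)}\bigr)^{d-1}$. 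For the second summand I would choose a monotone staircase $(i_k, j_k)_{k=1}^{a+b-1}$ from $(1,1)$ to $(a,b)$ in unit steps, ensuring that the sums $u_k = t_{i_k}+s_{j_k}$ are strictly increasing and hence pick out $a+b-1$ distinct heights in $T_A+T_B$. Since $(A+B)_{u_k} \supseteq A_{t_{i_k}} + B_{s_{j_k}}$ and $\sum_J |p_J(\cdot)|$ is monotone under set inclusion, applying the inductive hypothesis fiber by fiber yields
\[\sum_{u \in T_A+T_B} \sum_{J \subseteq [d-1]} |p_J((A+B)_u)| \;\geq\; \sum_{k=1}^{a+b-1} \bigl(\alpha_{i_k}^{1/(d-1)} + \beta_{j_k}^{1/(d-1)}\bigr)^{d-1}.\]

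The main obstacle is then the analytic step of deducing that the sum of the projection term and these $a+b-1$ staircase terms is at least $(|A|^{1/d}+|B|^{1/d})^d$. Because $|\pi(A)| \geq \max_i \alpha_i$ and likewise for $B$, the projection term naturally plays the role of an extra zeroth step, giving $a+b$ terms in all. The equality case of axis-aligned boxes then points to the H\"older inequality
\[(a+b)(x+y)^{d-1} \;\geq\; \bigl(a^{1/d}\,x^{(d-1)/d} + b^{1/d}\,y^{(d-1)/d}\bigr)^d,\]
which interpolates from the exponent $d-1$ up to $d$ and is tight precisely in the box case. My plan is to choose the staircase so that each row and column is traversed with a balanced multiplicity, use Minkowski's inequality to collapse the sum of $(d-1)$-st powers into a single $(d-1)$-st power of sums $(\sum_k \alpha_{i_k} + |\pi(A)|)$ and $(\sum_k \beta_{j_k} + |\pi(B)|)$, and then finish with the H\"older bound above. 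Making this work uniformly in the slice sizes $\alpha_i, \beta_j$ (rather than just in the uniform ``box'' case where the calculation is clean) is where I expect the bulk of the technical work to lie.
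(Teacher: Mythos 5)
The paper does not actually prove this lemma; it is imported verbatim from CL22 (Lemma~2.1 there), so there is no in-text argument to compare against — I will therefore assess your proposal on its own terms.

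Your base case and the split of $\sum_{I\subseteq[d]}$ according to whether $d \in I$ are both correct, and a monotone staircase through the $a\times b$ grid of fibres is a standard and sensible device for extracting $a+b-1$ distinct heights in $T_A+T_B$. The problem is the final analytic step, which you flag yourself as the heart of the matter, and which I do not believe closes as proposed. To ``collapse the sum of $(d-1)$-st powers into a single $(d-1)$-st power of sums'' you would need an inequality of the shape
$$\sum_{k} \bigl(u_k + v_k\bigr)^{d-1} \;\geq\; \Bigl( \bigl(\textstyle\sum_k u_k^{d-1}\bigr)^{1/(d-1)} + \bigl(\textstyle\sum_k v_k^{d-1}\bigr)^{1/(d-1)} \Bigr)^{d-1},$$
i.e.\ $\snorm{u+v}_{d-1}^{d-1} \ge (\snorm{u}_{d-1}+\snorm{v}_{d-1})^{d-1}$. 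That is the \emph{reverse} of Minkowski's inequality, valid only for exponents in $(0,1]$; for $d\ge 3$ genuine Minkowski gives exactly the opposite inequality, so your collapsing step points the wrong way. (For $d=2$ it is the trivial identity, which is why the plan feels clean in low dimension.) There is a second, independent difficulty: the staircase visits rows and columns with unequal multiplicities, so $\sum_k \alpha_{i_k}$ is generally \emph{larger} than $|A|$ (by roughly a factor $(a+b)/a$) and the ``plus one zeroth step'' $|\pi(A)|$ can dominate both, so the aggregated quantities $\sum_k\alpha_{i_k}+|\pi(A)|$ you feed into the H\"older bound are not themselves $|A|$; the H\"older inequality you wrote, which is correct and does match the box case, would then have to be fed carefully normalized inputs, and nothing in the plan explains how to do that normalization once the $\alpha_i$, $\beta_j$ are arbitrary. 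In short, the decomposition is right, the target H\"older inequality is the right closing move in the box case, but the bridge between them (reverse Minkowski plus a mismatch between staircase sums and $|A|,|B|$) is missing, and the tool you name for that bridge runs in the wrong direction for $d\ge 3$.
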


For our next result, we need the following estimate of Ruzsa~\cite{R94} for the size of sumsets in $\mathbb{R}^d$. We say that a subset $C$ of $\mathbb{R}^d$
is $k$-dimensional and write $\dim(C) = k$ if the dimension of the affine subspace spanned by $C$ is $k$. 

\begin{lem}[Ruzsa~\cite{R94}] \label{lem:hdsums}
If $A,B\subset \RR^d$, $|A|\geq |B|$ and $\dim(A+B)=d$, then 
$$|A+B|\geq |A|+d|B|-\frac{d(d+1)}{2}.$$
\end{lem}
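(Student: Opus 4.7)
We proceed by induction on the dimension $d$. The base case $d = 1$ is the classical one-dimensional bound $|A + B| \geq |A| + |B| - 1$, which matches the claim since $d(d+1)/2 = 1$.

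For the inductive step, apply a generic rotation so that the last coordinates of points in $A$, and the last coordinates of points in $B$, are all distinct, and decompose
\[A = \bigsqcup_{i=1}^{s} A_i, \qquad B = \bigsqcup_{j=1}^{t} B_j,\]
where $A_i$ and $B_j$ are the nonempty slices $A \cap \{x_d = h_i\}$ and $B \cap \{x_d = k_j\}$ with $h_1 < \cdots < h_s$ and $k_1 < \cdots < k_t$. Since sums $A_i + B_j$ with distinct heights $h_i + k_j$ lie in disjoint hyperplanes of the form $\{x_d = h_i + k_j\}$, focusing on the $s + t - 1$ extremal heights
\[h_1 + k_1 < h_1 + k_2 < \cdots < h_1 + k_t < h_2 + k_t < \cdots < h_s + k_t,\]
each of which is achieved by a unique pair $(i,j)$, yields
\[|A + B| \geq \sum_{j=1}^{t} |A_1 + B_j| + \sum_{i=2}^{s} |A_i + B_t|.\]
Each summand is a sumset in a hyperplane identified with $\RR^{d-1}$, and one applies the inductive hypothesis to those that span the full $(d-1)$-dimensional subspace, using the one-dimensional estimate or iterating the induction for the remaining, lower-dimensional summands. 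Summing these contributions and using $\sum_i |A_i| = |A|$ and $\sum_j |B_j| = |B|$, one aims to extract the target $|A| + d|B| - d(d+1)/2$.

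The main obstacle will be the careful bookkeeping needed to obtain \emph{exactly} the constant $-d(d+1)/2$, and in particular to choose the generic rotation so that enough of the extremal sumsets $A_1 + B_j$ and $A_i + B_t$ span full dimension $d-1$. Since $\dim(A + B) = d$, it is impossible for every slice of both $A$ and $B$ to be confined to a common low-dimensional subspace, so a good rotation direction is available; degenerate slices (single points or slices failing to span) must then be matched against the $s + t - 1$ ``free'' levels from the one-dimensional extremal structure so that the deficit between the $(d-1)$-dimensional hypothesis and the $d$-dimensional target, which equals $|B| - d$, is absorbed exactly by the contributions of the extra coordinate direction.
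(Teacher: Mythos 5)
The paper does not prove this lemma itself; it simply cites Ruzsa \cite{R94}, so there is no proof in the paper to compare against. Evaluating your sketch on its own merits, there is a genuine gap that goes beyond ``careful bookkeeping.''

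Your opening move is self-defeating. If you rotate so that the last coordinates of the points of $A$ are \emph{all distinct} (and likewise for $B$), then every slice $A_i = A \cap \{x_d = h_i\}$ and $B_j = B \cap \{x_d = k_j\}$ is a singleton, so $s = |A|$, $t = |B|$, and every $A_i + B_j$ is a single point. The staircase
\[
\sum_{j=1}^{t} |A_1 + B_j| + \sum_{i=2}^{s} |A_i + B_t|
\]
then equals $s + t - 1 = |A| + |B| - 1$, which is exactly the trivial one-dimensional bound and falls far short of $|A| + d|B| - d(d+1)/2$. There is no room to apply the inductive hypothesis: each summand has affine dimension $0$, not $d-1$, so there is nothing to induct on. The two requirements you impose on the rotation --- that heights be pairwise distinct so the extremal levels don't collide, and that the slices span $(d-1)$-dimensional hyperplanes so the inductive hypothesis is usable --- are mutually exclusive, and the sketch never resolves this tension. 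The concluding paragraph (matching ``degenerate slices'' against ``free levels'' to absorb a deficit of $|B| - d$) is not an argument; it names the size of the gap without explaining how it is to be closed, and in light of the singleton issue there is no mechanism by which it could be. Even if one replaces the fully generic rotation with some partial one, a direct computation in $d = 2$ shows that applying the one-dimensional bound $|A_1 + B_j| \geq |A_1| + |B_j| - 1$ to each term of the staircase does not yield $|A| + 2|B| - 3$ in general, so a genuinely different idea (Ruzsa's argument works with affinely independent tuples drawn from $A$ and $B$, not hyperplane slicing) is needed.
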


For $a \in V$, write $p_k(a)$ for the vector obtained by removing the $k$-th coordinate from $a$. For  $A \subset V$ and $x \in p_k(A)$, let $A_x = p_k^{-1}(x)$. We define the compression $C_k(A)$ of $A$ along the $k$-th coordinate to be the set $A'$ such that $p_k(A') = p_k(A)$ and, for each $x \in p_k(A)$, the $k$-th coordinates of $A'_x$ are $0, 1, \dots, |A_x| - 1$. 
It is known (see, for example, \cite[Lemma~2.1]{CL22}) that  $|C_k(A)+C_k(B)|\leq |A+B|$ for any finite $A,B\subset V$. We say that $A$ is compressed if $C_k(A)=A$ for all $k$. A compressed set $A\subset V_d$ has the property that if $(a_1,\ldots,a_d)\in A$ and $b_i\in \ZZ$ with $0\leq b_i\leq a_i$ for all $1 \le i \le d$, then $(b_1,\ldots,b_d)\in A$. The next lemma will allow us to assume that $A$ is both compressed and of low dimension when proving our main result.

\begin{lem} \label{lem:lowdim}
Suppose that $A\subset V$ is finite with $|A+\Phi(A)|=K|A|$. Then there is some $d\leq 2K$ and $A'\subset V_d$ with $|A'|=|A|$ such that $A'$ is compressed and $|A'+\Phi(A')|\leq |A+\Phi(A)|$.
\end{lem}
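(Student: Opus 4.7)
The plan is to replace $A$ by a compressed, low-dimensional $A'$ and then invoke Ruzsa's inequality. Specifically: (i) pass to a compressed $A^* \subset V$ with $|A^*| = |A|$ and $|A^* + \Phi(A^*)| \leq |A + \Phi(A)|$; (ii) relabel coordinates to place $A^*$ in $V_k$ where $k$ is the size of the support of $A^*$, yielding a compressed $A^{**} \subset V_k$ with no larger sumset; (iii) apply Ruzsa's inequality (Lemma~\ref{lem:hdsums}) in the ambient space $V_{k+1}$ to bound $k \leq 2K$.

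Step (ii) is straightforward. Let $\{i_1 < \cdots < i_k\}$ be the support of $A^*$, and define the linear map $\sigma: V \to V$ by $\sigma(e_{i_j}) = e_j$, extended to be zero on unsupported basis vectors; set $A^{**} = \sigma(A^*) \subset V_k$. Then $|A^{**}| = |A^*|$, and $A^{**}$ remains compressed, since $\sigma$ preserves the coordinate-wise partial order on supports. A coefficient-by-coefficient comparison at each position $l$, with a case distinction on whether $l$ and $l-1$ lie in $\{i_1,\ldots,i_k\}$, shows that collisions in $A^* + \Phi(A^*)$ descend to collisions in $A^{**} + \Phi(A^{**})$, so $|A^{**} + \Phi(A^{**})| \leq |A^* + \Phi(A^*)|$. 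Step (iii) then proceeds as follows: since $A^{**}$ is compressed with full support $\{1,\ldots,k\}$, it contains $\{0, e_1, \ldots, e_k\}$, giving $|A^{**}| \geq k+1$; also $A^{**} + \Phi(A^{**}) \supset \{0, e_1, \ldots, e_{k+1}\}$ has affine dimension $k+1$. Ruzsa's inequality then yields
\[K |A^{**}| \geq |A^{**} + \Phi(A^{**})| \geq (k+2)|A^{**}| - \binom{k+2}{2},\]
and combining with $|A^{**}| \geq k+1$ gives $k+2-K \leq (k+2)/2$, i.e., $k \leq 2K - 2 \leq 2K$. Setting $d = k$ and $A' = A^{**}$ completes the proof.

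The main obstacle is step (i). The standard compression lemma $|C_j(A) + C_j(B)| \leq |A+B|$ requires compressing both sets along the \emph{same} coordinate, whereas here $\Phi$ shifts coordinates, so compressing $A$ along coordinate $k$ naturally corresponds to compressing $\Phi(A)$ along coordinate $k+1$. Moreover, small examples show that a single application of $C_k$ to $A$ (with $\Phi(A)$ updated accordingly) can transiently \emph{increase} $|A + \Phi(A)|$, so one cannot simply iterate $C_k$ and track the sumset monotonically. I expect the right approach is to analyze the fully compressed set $A^* = C_1 C_2 \cdots C_d(A)$ (well-defined since compressions along distinct coordinates commute) and show $|A^* + \Phi(A^*)| \leq |A + \Phi(A)|$ directly via a slicing argument: for each fixed profile of $(a, b) \in A \times A$ outside coordinates $k$ and $k+1$, the contribution to the sumset in the $(k, k+1)$-plane is a union of products of translated fibers; after compressing all fibers to initial segments these become axis-aligned rectangles of the same area, whose total union has no greater cardinality by a rearrangement inequality.
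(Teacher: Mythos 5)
Your steps (ii) and (iii) essentially match the paper: after coordinate deletion you land in $V_k$ with $e_1,\ldots,e_k\in A'$, invoke Lemma~\ref{lem:hdsums} with $B=\Phi(A')$, and conclude $k\le 2K$. The coordinate-deletion surjection you gesture at in step (ii) is also the paper's argument (done there one coordinate at a time, which is cleaner than the global relabeling by $\sigma$). However, step (i) is the heart of the lemma, and you have correctly identified the difficulty there but have not resolved it: you explicitly write ``I expect the right approach is\ldots'' and offer only a speculative slicing/rearrangement scheme. That scheme would still have to confront the very mismatch you point out --- compressing $A$ along coordinate $k$ corresponds to compressing $\Phi(A)$ along coordinate $k+1$, so the standard lemma $|C_j(A)+C_j(B)|\le|A+B|$ does not apply to a single coordinate --- and you give no argument that the proposed rearrangement inequality actually goes through for the shifted pair of fibers.

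The paper's resolution is a short algebraic trick that your proposal misses. One observes the intertwining relation $\Phi\circ C_i = C_{i+1}\circ\Phi$, and, with $A\subset V_D$, applies to \emph{both} $A$ and $\Phi(A)$ the single composite operator $C_{[D+1]} := C_1\circ C_2\circ\cdots\circ C_{D+1}$. The standard compression inequality, iterated coordinate by coordinate, then gives $|C_{[D+1]}(A)+C_{[D+1]}(\Phi(A))|\le|A+\Phi(A)|$ with no mismatch, because the \emph{same} $C_j$ is used on both sets at each stage. The point is that the composite operator straightens itself out: $C_{[D+1]}(A)=C_{[D]}(A)$ since the $(D+1)$-th coordinate of $A$ is identically zero, while repeated use of the intertwining relation (together with the fact that $C_1$ fixes $\Phi(\cdot)$, whose first coordinate is always zero) yields $C_{[D+1]}(\Phi(A))=\Phi(C_{[D]}(A))$. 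Thus setting $A_1=C_{[D]}(A)$ gives $|A_1+\Phi(A_1)|\le|A+\Phi(A)|$ directly, which is exactly the statement you needed for step (i). Without this (or an equivalent) argument, your proof has a genuine gap at its central step.
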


\begin{proof}
Since $A$ is finite, $A\subset V_D$ for some $D$. Note that $\Phi\circ C_i=C_{i+1}\circ \Phi$ for all $i$. Denote by $C_{[i]}$ the composition $C_1\circ C_2\circ\cdots \circ C_i$. Then $C_{[D+1]}(A)=C_{[D]}(A)$ and $C_{[D+1]}(\Phi(A))=\Phi(C_{[D]}(A))$. Thus, setting $A_1=C_{[D]}(A)$, we have $|A_1|=|A|$ and 
\begin{align*}
    |A_1+\Phi(A_1)| &= |C_{[D]}(A)+\Phi(C_{[D]}(A))| = |C_{[D+1]}(A)+C_{[D+1]}(\Phi(A))| \leq  |A+\Phi(A)|.
\end{align*}
Furthermore, $A_1$ is compressed. 

Let $e_k=\lambda^{k-1}$ be the basis vectors for $k=1,\ldots,D$. If $e_k\not\in A_1$, then the $k$-th coordinate of every point of $A_1$ is 0. Let $A_1'$ be the set formed by replacing each point $(x_1,\ldots,x_{k-1},0,x_k,\ldots,x_{D-1})$ of $A_1$ with the point $(x_1,\ldots,x_{k-1},x_k,\ldots,x_{D-1})$, so that $A_1'\subset V_{D-1}$. We claim that $|A_1'+\Phi(A_1')|\leq |A_1+\Phi(A_1)|$. Indeed, every point of $A_1+\Phi(A_1)$ is of the form 
$$(x_1,x_2+y_1,x_3+y_2,\ldots,x_{k-1}+y_{k-2},y_{k-1},x_k,x_{k+1}+y_k,\ldots,x_{D-1}+y_{D-2},y_{D-1})$$
for some $(x_1,\ldots,x_{k-1},0,x_k,\ldots,x_{D-1}),(y_1,\ldots,y_{k-1},0,y_k,\ldots,y_{D-1})\in A_1$, whereas every point of $A_1'+\Phi(A_1')$ is of the form
$$(x_1,x_2+y_1,x_3+y_2,\ldots,x_{D-1}+y_{D-2},y_{D-1}).$$
There is a clear surjection from $A_1+\Phi(A_1)$ to $A_1'+\Phi(A_1')$ by summing and combining the $k$-th and $(k+1)$-th coordinates.

Repeating the above procedure whenever possible for each $k$, we obtain a set $A'$ with $|A'| = |A|$, $|A'+\Phi(A')|\leq |A+\Phi(A)|$ and $A'\subset V_d$ for some $d$ with $e_k\in A'$ for $k=1,\ldots,d$. By this last condition, $A'$ is $d$-dimensional and, moreover, $A'+\Phi(A')$ is $(d+1)$-dimensional. Hence, by Lemma~\ref{lem:hdsums}, we have $|A'+\Phi(A')|\geq (d+2)|A'|-\frac{(d+1)(d+2)}{2}$. Using that $|A'+\Phi(A')|\le K|A'|$ and $|A'|\geq d+1$, we get $d\leq 2K$, as required.
\end{proof}

We also note the following result of Pl\"unnecke--Ruzsa type.

\begin{lem} \label{lem:pr}
Suppose $A\subset V$ is finite. If $|A+\Phi(A)|\leq K|A|$ for some $K>0$, then $|(A+\Phi(A))+\Phi(A+\Phi(A))|\leq K^{10}|A|$.
\end{lem}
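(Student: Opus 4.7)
The key observation is that, since $\Phi$ is linear,
\[
(A + \Phi(A)) + \Phi(A + \Phi(A)) = A + \Phi(A) + \Phi(A) + \Phi^2(A),
\]
so the task reduces to bounding this four-term Minkowski sum by $K^{10}|A|$. The plan is to apply the multi-set Pl\"unnecke--Ruzsa inequality — if $|A + X_i| \leq K_i |A|$ for $i = 1, \dots, s$, then $|X_1 + \cdots + X_s| \leq (K_1 \cdots K_s)|A|$ — with $X_1 = A$, $X_2 = X_3 = \Phi(A)$, $X_4 = \Phi^2(A)$. This reduces the problem to bounding each of $|A + A|$, $|A + \Phi(A)|$ and $|A + \Phi^2(A)|$ in terms of $K$ and $|A|$.

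Two of the three bounds are essentially free. The middle one is just the hypothesis. For $|A + A|$, injectivity of $\Phi$ gives $|A + A| = |\Phi(A) + \Phi(A)|$, and Pl\"unnecke's inequality applied to the pair $(A, \Phi(A))$ with $|A + \Phi(A)| \leq K|A|$ yields $|\Phi(A) + \Phi(A)| \leq K^2 |A|$. The only real (but mild) obstacle is bounding $|A + \Phi^2(A)|$, since the hypothesis directly controls only sums of sets that are a single application of $\Phi$ apart. I would handle this with Ruzsa's triangle inequality in its sum form $|X + Z||Y| \leq |X + Y||Y - Z|$ (obtained from the standard difference version $|X - Z||Y| \leq |X - Y||Y - Z|$ by flipping the sign of $X$), taking $X = A$, $Y = \Phi(A)$, $Z = \Phi^2(A)$. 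This gives
\[
|A + \Phi^2(A)| \cdot |A| \leq |A + \Phi(A)| \cdot |A - \Phi(A)| = K|A|\cdot|A - \Phi(A)|,
\]
using $|\Phi(A) - \Phi^2(A)| = |A - \Phi(A)|$. A further application of the triangle inequality together with the bound $|\Phi(A) + \Phi(A)| \leq K^2|A|$ shows $|A - \Phi(A)| \leq K^3|A|$, so $|A + \Phi^2(A)| \leq K^4|A|$.

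Plugging the bounds $K^2$, $K$, $K$, $K^4$ into the multi-set Pl\"unnecke--Ruzsa inequality then produces
\[
|A + \Phi(A) + \Phi(A) + \Phi^2(A)| \leq K^2 \cdot K \cdot K \cdot K^4 \cdot |A| = K^8|A|,
\]
which is comfortably within $K^{10}|A|$. The only nontrivial ingredient is thus the bound on $|A + \Phi^2(A)|$: because $A$ and $\Phi^2(A)$ are not ``adjacent'' in the $\Phi$-orbit, the hypothesis does not apply to them directly, and Ruzsa's triangle inequality is needed to bridge the gap. Everything else is a routine application of the standard sumset calculus.
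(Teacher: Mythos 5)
Your proof is correct, but it takes a genuinely different route from the paper's. You invoke the multi-set Pl\"unnecke--Ruzsa inequality (due to Gyarmati--Matolcsi--Ruzsa, and reproved by Petridis), which bounds $|B_1+\cdots+B_k|$ by $\prod_i |A+B_i|/|A|^{k-1}$, and then supply the four ingredient bounds $|A+A|\le K^2|A|$, $|A+\Phi(A)|\le K|A|$ (twice), and $|A+\Phi^2(A)|\le K^4|A|$, the last obtained via two applications of Ruzsa's triangle inequality. This gives $K^8|A|$, beating the required $K^{10}$. The paper instead avoids the multi-set version entirely: it first uses Ruzsa's triangle inequality to get $|A+A|\le K^2|A|$, then the \emph{classical} single-summand Pl\"unnecke--Ruzsa inequality to get $|A+A+A+A|\le K^8|A|$, and finally two further applications of Ruzsa's triangle inequality to swap three copies of $A$ for two copies of $\Phi(A)$ and one copy of $\Phi^2(A)$, each swap costing a factor of $K$ and landing at $K^{10}$. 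Your argument is somewhat cleaner and sharper, at the cost of relying on the stronger multi-set form of Pl\"unnecke--Ruzsa; the paper's argument is more self-contained, using only the classical Pl\"unnecke--Ruzsa and Ruzsa triangle inequalities. Both are valid, and since the downstream use only needs $K^{O(1)}$, the precise exponent is immaterial.
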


\begin{proof}
The sum version of Ruzsa's triangle inequality~\cite{R96} states that for any finite subsets $X,Y,Z$ of an abelian group,
$$|X||Y+Z|\leq |X+Y||X+Z|.$$
Setting $X=\Phi(A)$, $Y=Z=A$ and noting that $|\Phi(A)|=|A|$, we have
$$|\Phi(A)||A+A|\leq |A+\Phi(A)||A+\Phi(A)|,$$
so that $|A+A|\leq K^2|A|$. Hence, by the Pl\"unnecke--Ruzsa inequality, $|A+A+A+A|\leq K^8|A|$. Thus, another application of Ruzsa's triangle inequality (with $X=\Phi(A)$, $Y=A$, $Z=\Phi(A)+\Phi(A)+\Phi(A)$) yields
$$|\Phi(A)||A+\Phi(A)+\Phi(A)+\Phi(A)|\leq |A+\Phi(A)||\Phi(A)+\Phi(A)+\Phi(A)+\Phi(A)|,$$
so that $|A+\Phi(A)+\Phi(A)+\Phi(A)|\leq K^9|A|$. Applying Ruzsa's triangle inequality once more (with $X=\Phi(A)$, $Y=A+\Phi(A)+\Phi(A)$, $Z=\Phi^2(A)$), we see that
$$|\Phi(A)||A+\Phi(A)+\Phi(A)+\Phi^2(A)|\leq |A+\Phi(A)+\Phi(A)+\Phi(A)||\Phi(A)+\Phi^2(A)|,$$
so that $|A+\Phi(A)+\Phi(A)+\Phi^2(A)|\leq K^{10}|A|$, as required.
\end{proof}

We now come to the main novel ingredient in our proof, which is a strong upper bound for the size of the projections of any compressed $A\subset V_d$ in terms of $|A+\Phi(A)|$. 
Given a set $I\subseteq [d]$, we will write $\alpha(I)$ for the length of the longest set of consecutive integers in $I$. 

\begin{lem} \label{lem:noplane}
Let $A\subset V_d$ be finite and compressed with $|A+\Phi(A)|=N$. Then, for any subset $I\subseteq [d]$,
$$|p_I(A)|\leq N^{\frac{k}{k+1}},$$
where $k=\alpha(I)$.
\end{lem}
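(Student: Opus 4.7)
The plan is to exhibit at least $|p_I(A)|^{(k+1)/k}$ distinct elements inside $A+\Phi(A)$, which immediately yields the desired bound. For each $b\in B:=p_I(A)$, compression of $A$ ensures that the canonical lift $\tilde b\in A$, with the coordinates of $b$ placed at positions $I$ and zeros elsewhere, lies in $A$. Hence $\tilde b+\Phi(\tilde c)\in A+\Phi(A)$ for every $b,c\in B$, and the task reduces to lower-bounding the image size of the map $(b,c)\mapsto\tilde b+\Phi(\tilde c)$ by $|B|^{(k+1)/k}$.

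Write $I=I_1\sqcup\cdots\sqcup I_r$ for the decomposition into maximal runs of consecutive indices, with lengths $k_1,\ldots,k_r$ and $k=\max_j k_j$, and let $s_j$ denote the start of $I_j$. The element $\tilde b+\Phi(\tilde c)$ is supported on $\bigsqcup_j(I_j\cup\{s_j+k_j\})$, and its restriction to each extended block is $\psi_j(b_{I_j},c_{I_j}):=(b_{s_j},\,b_{s_j+1}+c_{s_j},\,\ldots,\,b_{s_j+k_j-1}+c_{s_j+k_j-2},\,c_{s_j+k_j-1})\in\NN^{k_j+1}$. First I would handle the single-block case $r=1$, which reduces to showing $|B+\Phi(B)|\geq |B|^{(k+1)/k}$ for any compressed $B\subset\NN^k$. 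I would prove this by induction on $k$: the base case $k=1$ is immediate from $B+\Phi(B)=B\times B$, and the inductive step would proceed by slicing $|B+\Phi(B)|=\sum_{b_1,c_k}|U_{b_1}+V_{c_k}|$, where $U_{b_1},V_{c_k}\subset\NN^{k-1}$ are compressed slices of $B$, and then combining the inductive hypothesis with a Brunn--Minkowski-type bound via Lemma~\ref{lem:discbm}.

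For the general case $r\geq 2$, I would count the image block by block. From each image element one can read off the boundary data $(b_{s_j})_j$ and $(c_{s_j+k_j-1})_j$, which range over projections of $B$ onto the first and last positions of each block, while the interior sum coordinates $b_i+c_{i-1}$ contribute additional multiplicity governed by the single-block analysis applied to the longest block. Applying the Loomis--Whitney inequality to $B$ lower-bounds the relevant projections in terms of $|B|$, and combining this with the single-block estimate on the block of length $k$ produces the required $|B|^{(k+1)/k}$ lower bound on the image. The main obstacle I foresee is the inductive step in the single-block case: obtaining a clean enough lower bound on $|U_{b_1}+V_{c_k}|$ for Minkowski sums of compressed sets in $\NN^{k-1}$ to absorb the low-order boundary corrections from discrete Brunn--Minkowski, after which the block-wise combination via Loomis--Whitney should go through without further loss in the exponent.
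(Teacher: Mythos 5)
Your reduction is sound: since $A$ is compressed, $B := p_I(A) \subseteq A$, so $B + \Phi(B) \subseteq A + \Phi(A)$, and the lemma would follow from showing $|B + \Phi(B)| \geq |B|^{(k+1)/k}$ for any compressed $B$ supported on $I$. This intermediate claim is true --- it is exactly what the paper's argument gives when applied with $A$ replaced by $B$. But your proposed route to it has genuine gaps, and the obstacle you flag is indeed fatal rather than a technicality.

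The core problem is in the single-block inductive step. After slicing, you want a lower bound on $|U_{b_1} + V_{c_k}|$ of Brunn--Minkowski type, but Lemma~\ref{lem:discbm} only bounds the \emph{sum} $\sum_{J \subseteq [k-1]} |p_J(U_{b_1} + V_{c_k})|$ from below by $(|U_{b_1}|^{1/(k-1)} + |V_{c_k}|^{1/(k-1)})^{k-1}$. Since $U_{b_1} + V_{c_k}$ is compressed, every projection term on the left is at most $|U_{b_1} + V_{c_k}|$, so the best you can extract is $|U_{b_1} + V_{c_k}| \geq 2^{-(k-1)}(|U_{b_1}|^{1/(k-1)} + |V_{c_k}|^{1/(k-1)})^{k-1}$. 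That $2^{-(k-1)}$ is not a low-order correction: propagated through the induction it weakens the lemma to roughly $|p_I(A)| \leq (2^{k} N)^{k/(k+1)}$, and the $2^k$ gain exactly cancels the $2^{-k}$ factor in the final summation in the proof of Theorem~\ref{thm:main}, so the application collapses. The multi-block step is also unsubstantiated: $|B + \Phi(B)|$ is \emph{not} bounded below by a product of block contributions (take $B$ supported on two singletons with $B \subsetneq p_{I_1}(B) \times p_{I_2}(B)$), and Loomis--Whitney applied to $B$ or to $B + \Phi(B)$ gives inequalities pointing the wrong way for a lower bound on the image.

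The paper takes a different and loss-free route. It proves the submodularity-type inequality $|p_{J_1}(A)|\,|p_{J_2}(A)| \leq N\,|p_{J_1 \cap \phi(J_2)}(A)|$ for any $J_1, J_2 \subseteq [d]$ via the explicit injection $(x,y) \mapsto (p_{J_1 \cap \phi(J_2)}(x),\, x + \Phi(y))$ from $p_{J_1}(A) \times p_{J_2}(A)$ into $p_{J_1 \cap \phi(J_2)}(A) \times (A + \Phi(A))$, using only that $p_J(A) \subseteq A$ for compressed $A$. Taking $J_1 = I$, $J_2 = I_i$ where $I_i = \{j \in I : \{j, j-1, \dots, j-i\} \subseteq I\}$, one has $I \cap \phi(I_i) = I_{i+1}$, and multiplying the $k$ resulting inequalities telescopes to $|p_I(A)|^{k+1} \leq N^k$ with no extraneous constants. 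This handles all block structures uniformly, avoiding both the Brunn--Minkowski loss and the delicate multi-block bookkeeping. If you want to salvage your approach, the telescoping injection is the missing ingredient that replaces the Brunn--Minkowski slicing entirely.
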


\begin{proof}
For any set of integers $J$, define $\phi(J)=\setcond{j+1}{j\in J}$. We claim that, for any $J_1,J_2\subset [d]$,
$$\frac{|p_{J_1}(A)||p_{J_2}(A)|}{|p_{J_1\cap \phi(J_2)}(A)|}\leq N.$$
To show this, we will exhibit an injection $p_{J_1}(A)\times p_{J_2}(A)\to p_{J_1\cap \phi(J_2)}(A)\times (A+\Phi(A))$. Let $(x,y)\in p_{J_1}(A)\times p_{J_2}(A)$ and 
consider the map
$$(x,y)\mapsto (p_{J_1\cap \phi(J_2)}(x), x+\Phi(y)).$$
Since $A$ is compressed, $p_J(A)\subseteq A$ for every $J$, which easily implies that $(p_{J_1\cap \phi(J_2)}(x), x+\Phi(y))$ is indeed in $p_{J_1\cap \phi(J_2)}(A)\times (A+\Phi(A))$. 
To see that the map is injective, it is enough to observe that 
$$x=p_{J_1\cap \phi(J_2)}(x)+p_{J_1\setminus \phi(J_2)}(x)=p_{J_1\cap \phi(J_2)}(x)+p_{J_1\setminus \phi(J_2)}(x+\Phi(y))$$
and 
$$\Phi(y)=p_{\phi(J_2)}(\Phi(y))=p_{\phi(J_2)}(x+\Phi(y))-p_{\phi(J_2)}(x)=p_{\phi(J_2)}(x+\Phi(y))-p_{J_1\cap \phi(J_2)}(x).$$

For $i=0,1,\ldots,k$, let
$$I_i=\setcond{j\in I}{\set{j,j-1,\ldots,j-i}\subseteq I}.$$
Then $I=I_0\supset I_1\supset\cdots \supset I_{k}=\emptyset$ 
and, for each $i=0,1,\ldots,k-1$, $I\cap \phi(I_i)=I_{i+1}$. Thus, by the claim above, 
$$\frac{|p_I(A)||p_{I_i}(A)|}{|p_{I_{i+1}}(A)|}\leq N.$$
Taking the product of this inequality over all $i=0,1\ldots,k-1$, we get
$$|p_I(A)|^{k+1}\leq N^{k}$$
and the lemma follows.
\end{proof}

We are now ready to prove our main result.

\begin{proof}[Proof of Theorem~\ref{thm:main}]
Suppose instead that $|A+\Phi(A)|=Kn$, where $K<e^{c\sqrt{\log n}}$ for some $c>0$ that will be fixed later. By Lemma~\ref{lem:lowdim}, we may assume that $A$ is compressed and $A \subset V_d$ with $d\leq 2K$.

By Lemma~\ref{lem:pr}, we have
$$|A+\Phi(A)+\Phi(A+\Phi(A))|\leq K^{10}n.$$
Since $A$ is compressed, so are $\Phi(A)$ and, therefore, $A+\Phi(A)$. Hence, Lemma~\ref{lem:noplane} implies that
$$|p_I(A+\Phi(A))|\leq (K^{10}n)^{\frac{k}{k+1}}$$
for any $I\subseteq [d+1]$, where $k=\alpha(I)$. 
But the number of $I\subseteq [d+1]$ with $\alpha(I)=k$ is at most
$$\sum_{i=1}^{d+2-k}\abs{\setcond{I\subseteq [d+1]}{i,i+1,\ldots,i+k-1\in I}}\leq (d+2)2^{d+1-k}.$$
Thus, by Lemma~\ref{lem:discbm}, we have that
\begin{align*}
    2^{d+1}n &\leq \sum_{I\subseteq [d+1]} |p_I(A+\Phi(A))| \leq \sum_{k=0}^{d+1}\abs{\setcond{I\subseteq [d+1]}{\alpha(I)=k} }(K^{10}n)^{\frac{k}{k+1}}\\
    &\leq \sum_{k=0}^{d+1} (d+2)2^{d+1-k}(K^{10}n)^{\frac{k}{k+1}}.
\end{align*}
Therefore, 
\begin{align*}
1 &\leq \sum_{k=0}^{d+1} (d+2)2^{-k}K^{\frac{10k}{k+1}}n^{-\frac{1}{k+1}} \leq 2(d+2)\sum_{k=0}^{d+1} 2^{-k-1}K^{10}n^{-\frac{1}{k+1}}\\
&\leq 2(d+2)\sum_{k=0}^{d+1} e^{-(k+1)\log 2+10c\sqrt{\log n}-\frac{\log n}{k+1}}\\
&\leq 2(d+2)\sum_{k=0}^{d+1} e^{-2\sqrt{(\log 2)\log n}+10c\sqrt{\log n}}\quad \paren{\text{using $(k+1)\log 2+\frac{\log n}{k+1}\geq 2\sqrt{(\log 2)\log n}$}}\\
&= 2(d+2)^2e^{(10c-2\sqrt{\log 2})\sqrt{\log n}} \leq e^{(13c-2\sqrt{\log 2})\sqrt{\log n}},
\end{align*}
which is a contradiction for $c=0.1$ and $n$ sufficiently large. For smaller $n$, we may use the trivial estimate $|A+\Phi(A)| \geq 2|A|-1$ to choose an appropriate $c$ that works for all $n$.
\end{proof}

As a final remark, we note that the conclusion of Theorem~\ref{thm:mainintro} also holds for any finite subset $A$ of $\mathbb{C}$ and any transcendental $\lambda \in \mathbb{C}$. Indeed, Lemma~\ref{lem:alg} again reduces the problem to estimating $|A + \lambda \cdot A|$ for finite $A \subset \mathbb{Q}[\lambda]$ and then to estimating $|A + \Phi(A)|$ for finite $A \subset V$, so the rest of the proof goes through without change.

\end{document}